\theoremstyle{plain}
  \newtheorem{thm}{Theorem}[section]
  \newtheorem{prop}[thm]{Proposition}
  \newtheorem{lem}[thm]{Lemma}
  \newtheorem{cor}[thm]{Corollary}
\theoremstyle{definition}
  \newtheorem{dfn}[thm]{Definition}
  \newtheorem{exmp}[thm]{Example}
\theoremstyle{remark}
  \newtheorem{rem}[thm]{Remark}
\noindent\makebox[0mm][r]{\rm(\arabic{enumi})}}
\newcommand{\Ann}{\operatorname{Ann}}
\newcommand{\Ass}{\operatorname{Ass}}
\newcommand{\depth}{\operatorname{depth}}
\newcommand{\Ext}{\operatorname{Ext}}
\newcommand{\Hom}{\operatorname{Hom}}
\newcommand{\height}{\operatorname{height}}
\newcommand{\cd}{\operatorname{cd}}
\newcommand{\Rad}{\operatorname{Rad}}
\newcommand{\Spec}{\operatorname{Spec}}
\newcommand{\Supp}{\operatorname{Supp}}
\newcommand{\V}{\operatorname{V}}
\newcommand{\grade}{\operatorname{grade}}
\newcommand{\fgrade}{\operatorname{fgrade}}
\def\1{{\mathbf 1}}
\def\fa{{\mathfrak a}}
\def\fb{{\mathfrak b}}
\def\fm{{\mathfrak m}}
\def\fp{{\mathfrak p}}
\def\fq{{\mathfrak q}}
\def\<{{\langle}}
\def\>{{\rangle}}
\definecolor{myc}{cmyk}{0.3,0.5,1,0}
\newcommand{\excise}[1]{}
\begin{document}

\title[Vanishing of ext-functors and Faltings' Annihilator Theorem]{Vanishing of ext-functors and Faltings' Annihilator Theorem for relative Cohen-Macaulay modules}

\address{Department of Mathematics, Payame Noor University, Tehran, 19395-3697, Iran.} 
\email{khahmadi@pnu.ac.ir}

\author[Ahmadi Amoli, Mast Zohouri and Faramarzi]{Kh. Ahmadi Amoli, M. Mast Zohouri and S. O. Faramarzi}

\address{Department of Mathematics, Payame Noor University, Tehran, 19395-3697, Iran.} 
\email{m.mast.zohouri@gmail.com}

\address{Department of Mathematics, Payame Noor University, Tehran, 19395-3697, Iran.}
\email{s.o.faramarzi@gmail.com}

\subjclass[2010]{13D45, 13E05, 13C14}

\keywords{Ext-functors, Local cohomology modules, Relative Cohen-Macaulay filtered modules, Faltings' Annihilator Theorem}
%%%%%%%%%%%%%%%%%%%%%%%%%%%%%%%%%%%%%%%%%%%%%%%%%%%%%%%%%%%%%%%%%%%%%%%%%%%%%%%%%%%%%%%%%
\begin{abstract}
Let $R$ be a commutative Noetherian ring, $\fa$, $\fb$ be two ideals of $R$, and $M$ be a finite $R$-module. Some vanishing and relative Cohen-Macaulayness of Ext-functors based on RCMF modules are studied. It is shown that over an arbitrary Noetherian ring, the Faltings' Annihilator Theorem holds for any relative Cohen-Macaulay module $M$ w.r.t $\fa$ with $\Supp(M/\fa M)\setminus\V(\fb)\neq\phi$. A number of new results have been derived of the finiteness dimension $f_{\fa}(M)$ of $M$ relative to $\fa$. Also, some applications of relative Cohen-Macaulay modules w.r.t $\fa$ have been shown.
\end{abstract}

\maketitle
\vspace{-2ex}%
\setcounter{tocdepth}{1}%

\section{Introduction}
Throughout this paper, $R$ is a commutative Noetherian ring with identity of positive Krull dimension $n$, $\fa$ and $\fb$ two ideals of $R$ and $M$ is a finite $R$-module of positive dimension $d$. We denote by $H^{i}_{\fa}(M)$ the ith local cohomology module of $M$ supported in $\fa$. 
Before formulating the results of the paper we recall some  related concepts. Based on \cite[Definition 2.2]{RZ}, a finite $R$-module $M$ is called {\it relative Cohen-Macaulay} w.r.t $\fa$ if there is precisely one non-vanishing local cohomology module of $M$ w.r.t $\fa$. Clearly this is the case if and only if $\grade(\fa,M)=\cd(\fa,M)$, where $\cd(\fa,M)$ is the largest integer $i$ for which $H^{i}_{\fa}(M)\neq 0$ and $\grade(\fa,M)$ is the least integer $i$ for which $H^{i}_{\fa}(M)\neq 0$. This concept has a connection with a notion which has been studied under the title of {\it cohomologically complete intersection ideals} in \cite{HSch1}. The present authors studied some properties of relative Cohen-Macaulay rings and modules in \cite{MAF} and \cite{M}.\\
In view of \cite[Definition 2.1]{ASN}, the increasing filtration $\mathcal{M}=\{M_{i}\}^{c}_{i=0}$ of submodules of $M$ with $c=\cd(\fa,M)$ is called the {\it cohomological dimension filtration} of $M$ if for all integer $0\leq i\leq c$, $M_{i}$ is the largest submodule of $M$ such that $\cd(\fa,M_{i})\leq i$. Recently, in \cite{MAF}, we have defined and studied the concept of relative Cohen-Macaulay filtered modules w.r.t $\fa$ (abbreviated as RCMF modules w.r.t $\fa$) as follows. Let $\mathcal{M}=\{M_{i}\}^{c}_{i=0}$ be the cohomological dimension filtration of submodules of $M$, where $c=\cd(\fa,M)$. Then $M$ is said to be a {\it relative Cohen-Macaulay filtered module} ({\it relative sequentially Cohen-Macaulay module}) w.r.t $\fa$, whenever $\mathcal{M}_{i}={M_{i}}/{M_{i-1}}$ is either zero or an $i$-cohomological dimensional relative Cohen-Macaulay module w.r.t $\fa$ for all $1\leq i\leq c$ (see \cite[Definition 2.3]{MAF}).
\\
Recall that the finiteness dimension $f_{\fa}(M)$ of $M$ relative to $\fa$ is defined as
$$
f_{\fa}(M):=\inf\{i\in\mathbb{N}_0: H^{i}_{\fa}(M)\ \text{is not finitely generated}\},
$$
if there exists and $+\infty$ otherwise. Another formulation is the $\fb$-finiteness dimension of $M$ relative to $\fa$ which is defined as 
$$
f_{\fa}^{\fb}(M):=\inf\{i\in\mathbb{N}_0: \fb\nsubseteq\Rad(0:_R H^{i}_{\fa}(M))\},
$$
which is either a positive integer or $+\infty$.
\\
Moreover, the $\fb$-minimum $\fa$-adjusted depth $\lambda_{\fa}^{\fb}(M)$ of $M$ is defined as
$$
\lambda_{\fa}^{\fb}(M):=\inf\{\depth M_{\fp}+\height(\fa+\fp)/\fp: \fp\in\Spec(R)\setminus\V(\fb)\},
$$
which is either a positive integer or $+\infty$ where $\V(\fb)$ denotes the set of prime ideals containing $\fb$.
\\
The paper is divided into 3 sections. In section 2, we study vanishing and relative Cohen- Macaulayness of Ext-functors for RCMF modules with respect to an ideal. In fact, in \cite[Theorem 1.4]{H}, the Peskine's characterization of sequentially Cohen-Macaulay modules is presented in terms of Ext-groups in the graded case. Also, in \cite[Definition 2.10]{MAF}, we have defined cohomological canonical module of $M$ as $\Hom_R(H^{\cd(\fa,M)}_{\fa}(M),E_R(k))$ an denoted it by $K(M)$. As a main result of this paper we prove the following result.\\
Let $(R,\fm)$ be a relative Cohen-Macaulay local ring w.r.t $\fa$ with $\cd(\fa,R)=c$ and $K(R)$ be its cohomological canonical module. Assume that $M$ is an RCMF module w.r.t $\fa$ with the filtration in Lemma~\ref{l1} and $c_i=\cd(\fa,M_i/M_{i-1})$ for all $1\leq i\leq r$. Then $\Ext^{c-c_i}_{R}(M,K(R))\cong\Ext^{c-c_i}_{R}(M_i/M_{i-1},K(R))$ is relative Cohen-Macaulay w.r.t $\fa$ of cohomological dimension $c_i$ for some $1\leq i\leq r$ and $\Ext^{j}_{R}(M,K(R))=0$ if $j\notin\{c-c_1,\ldots,c-c_r\}$. Consequently, in Corollary~\ref{c1}, we partially show a similar result for RCMF $R$-modules w.r.t $\fa$, where $R$ is an arbitrary commutative Noetherian ring. More precisely, we show that if $(R,\fm)$ is a local ring which is relative Cohen-Macaulay w.r.t $\fa$ with $\cd(\fa,R)=c$ and $M$ is an RCMF module w.r.t $\fa$, then the modules $\Ext^{c-i}_{R}(M,K(R))$ are either zero for all $0\leq i\leq\cd(\fa,M)$ or relative Cohen-Macaulay w.r.t $\fa$ of cohomological dimension $i$. 
\\
Section 3 is devoted to examine the behavior of the notion $f_{\fa}(M)$ more closely as the annihilators of local cohomology modules is of crucial importance for researches in local cohomology. In analogy with the notation $\fa(M)={\fa}_0(M)\ldots {\fa}_{d-1}(M)$ in a local ring $(R,\fm)$, defined in \cite{C} and \cite{SCH}, where ${\fa}_i(M)$ is the annihilator of $H^{i}_{\fm}(M)$ for all $0\leq i\leq d-1$, we define the following notation for an $R$-module $M$ and an ideal $\fa$ of $R$ with $h:=\height_M(\fa)$, where $R$ is not necessarily local. We set
$$
{\fa}'(M):={\fa}'_{0}(M)\ldots{\fa}'_{h-1}(M){\fa}'_{h+1}(M)\ldots{\fa}'_d(M),
$$
where ${\fa}'_{i}(M)=\Ann_R(H^{i}_{\fa}(M))$ for all $0\leq i\leq d$ and $i\neq h$ and give some results concerning this notation.
\\
One question about the annihilation of local cohomology modules is that, which ideals annihilate the local cohomology modules, and the classical theorem of this subject is Faltings' Annihilator Theorem (see \cite{F}). Faltings' Annihilator Theorem states that, if $R$ is a homomorphic image of a regular ring or if $R$ has a dualizing complex, then for every choice of ideals $\fa$ and $\fb$ of $R$, $\lambda_{\fa}^{\fb}(M)=f_{\fa}^{\fb}(M)$. There are some improvement of the conditions of this theorem (see for example \cite{BSH}). In Theorem~\ref{t1}, as one of our main result of this paper, we shall show that over an arbitrary Noetherian ring $R$, the Faltings' Annihilator Theorem of local cohomology modules holds for any relative Cohen-Macaulay $R$-module $M$ w.r.t $\fa$ whenever $\Supp(M/\fa M)\setminus\V(\fb)$ is not empty. As a particular case of this main result we conclude that on a Noetherian local ring $(R,\mathfrak{m})$, if $M$ is a Cohen-Macaulay $R$-module, then $f_{\mathfrak{m}}^{R}(M)=\lambda_{\mathfrak{m}}^{R}(M)$.
As it is mentioned in Remark~\ref{r3}, if $M$ is a relative Cohen-Macaulay $R$-module w.r.t $\fa$, $f_{\fa}(M)$ is the largest number in which $H^{f_{\fa}(M)}_{\fa}(M)$ is not Artinian.
In contrast, Proposition~\ref{p4} presents an Artinian submodule of such modules. Corollary~\ref{c7} determines the behavior of the finiteness dimension of modules respect to non-zerodivisors, where the underlying module is relative Cohen-Macaulay w.r.t $\fa$. Some characterizations of $f_{\fa}(-)$ are provided via ring homomorphisms (Corollaries~\ref{c6} and~\ref{c2}). Furthermore, we present two classes of modules, ``multiplication" and ``semidualizing" modules that satisfy $f_{\fa}(M)=f_{\fa}(R)$ (Corollaries~\ref{c4} and ~\ref{c5}).
It is well-known that if $R$ is a complete Cohen-Macaulay local ring, then $w_R=\Hom_R(H^{n}_{\fm}(R),E(R/\fm))$ is a canonical module of $R$.
We end this section by Corollary~\ref{c3} which shows that for a relative Cohen-Macaulay local ring $R$ w.r.t $\fa$, $f_{\fa}(w_R)=f_{\fa}(R)$ where $w_R$ denotes the canonical module of $R$.
\\
For unexplained notation and terminology about local cohomology modules, we refer the reader to \cite{BSH}.
%%%%%%%%%%%%%%%%%%%%%%%%%%%%%%%%%%%%%

\section{Vanishing and relative Cohen-Macaulayness of Ext-functors based on RCMF modules}\label{3}
This section is devoted to deal with vanishing and relative Cohen-Macaulayness of Ext-functors for RCMF modules w.r.t $\fa$. We begin by recalling the following.

\begin{dfn}(see \cite[Definition 2.3]{MAF}) \label{d1} 
Let $M$ be a finite $R$-module and $\mathcal{M}=\{M_{i}\}^{c}_{i=0}$ be the cohomological dimension filtration of submodules of $M$, where $c=\cd(\fa,M)$. $M$ is called a {\it relative Cohen-Macaulay filtered module} ({\it relative sequentially Cohen-Macaulay module}) w.r.t $\fa$, whenever $\mathcal{M}_{i}={M_{i}}/{M_{i-1}}$ is either zero or an $i$-cohomological dimensional relative Cohen-Macaulay module w.r.t $\fa$ for all $1\leq i\leq c$. Let us abbreviate this notion by RCMF.
\end{dfn}

\medskip
\begin{dfn} (see \cite[Definition 2.10]{MAF})\label{def} 
Let $(R,\fm,k)$ be a local ring and $M$ be a finite $R$-module with $\cd(\fa,M)=c$. For $i \neq c$, the {\it ith cohomological deficiency module} of $M$ is defined by $$K^{i}_{\fa}(M):=D_{R}(H^{i}_{\fa}(M))=\Hom_{R}(H^{i}_{\fa}(M),E_{R}(k)).$$
The module $K(M):=K^{c}_{\fa}(M)$ is called the {\it cohomological canonical module} of $M$. Note that $K^{i}_{\fa}(M)=0$ for all $i<0$ or $i>c$.
\end{dfn}

\medskip
\begin{lem} (see \cite[Theorem 4.3]{RZ}) \label{l2}
Let $(R,\fm)$ be a relative Cohen-Macaulay local ring w.r.t $\fa$ with $\cd(\fa,R)=c$ and $K(R)$ be its canonical module. For all $0\leq t\leq c$ and all relative Cohen-Macaulay $R$-module $M$  w.r.t $\fa$ with $\cd(\fa,M)=t$, we have  
\begin{itemize}
\item[(i)]
$\Ext^{j}_{R}(M,K(R))=0$ for all $j\neq c-t$.
\item[(ii)]
$\Ext^{c-t}_{R}(M,K(R))$ is a relative Cohen-Macaulay module w.r.t $\fa$ of cohomological dimension $t$.
\end{itemize}
\end{lem}

\medskip
The next lemma turns out directly from Definition~\ref{d1}.
\begin{lem}\label{l1}
Let $M$ be an RCMF $R$-module w.r.t $\fa$ with its filtration as $0=M_0\subset M_1\subset\ldots\subset M_r=M$ where $r=\cd(\fa,M)$. Then for each $0\leq i\leq r$, the module $M/M_i$ is RCMF w.r.t $\fa$ with the filtration $0=M_i/M_i\subset M_{i+1}/M_i\subset\ldots\subset M_r/M_i=M/M_i$.
\end{lem}

\medskip
In order to prove Corollary~\ref{c1}, we prove the following.
\begin{thm}
Let $(R,\fm)$ be a relative Cohen-Macaulay local ring w.r.t $\fa$ with $\cd(\fa,R)=c$ and $K(R)$ be its cohomological canonical module. Assume that $M$ is an RCMF module w.r.t $\fa$ with the filtration in Lemma~\ref{l1} and $c_i=\cd(\fa,M_i/M_{i-1})$ for all $1\leq i\leq r$. Then $\Ext^{c-c_i}_{R}(M,K(R))\cong\Ext^{c-c_i}_{R}(M_i/M_{i-1},K(R))$ is relative Cohen-Macaulay w.r.t $\fa$ of cohomological dimension $c_i$ for some $1\leq i\leq r$ and $\Ext^{j}_{R}(M,K(R))=0$ if $j\notin\{c-c_1,\ldots,c-c_r\}$.
\end{thm}
\begin{proof}
The proof is by induction on $r$. For $r=1$, $M=M_1/M_0$ is relative Cohen-Macaulay w.r.t $\fa$. Hence, by Lemma~\ref{l2}, $\Ext^{j}_{R}(M,K(R))=0$ for all $j\neq c-1$ and $\Ext^{c-1}_{R}(M,K(R))$ is relative Cohen-Macaulay w.r.t $\fa$ of cohomological dimension one. (Note that $c_1=\cd(\fa,M_1/M_0)$ is equal to one)
Now, assume that $r>1$ and, inductively, the assertion is true for $r-1$. From the short exact sequence $$0\longrightarrow M_1\longrightarrow M\longrightarrow M/M_1\longrightarrow 0,$$ we get the following long exact sequence

\[\begin{array}{ll}
\ldots & \longrightarrow\Ext^{j}_{R}(M/M_1,w_R)\longrightarrow\Ext^{j}_{R}(M,w_R)
\longrightarrow\Ext^{j}_{R}(M_1,w_R) \\ & \longrightarrow\Ext^{j+1}_{R}(M/M_1,w_R)\longrightarrow\ldots
\end{array}\]

By Lemma~\ref{l2}, $\Ext^{j}_{R}(M_1,K(R))=0$ for all $j\neq c-c_1$ and $\Ext^{c-c_1}_{R}(M_1,K(R))$ is relative Cohen-Macaulay w.r.t $\fa$ with cohomological dimension $c_1$. Thus we get the following exact sequence
\[\begin{array}{ll}
0 & \longrightarrow\Ext^{c-c_1}_{R}(M/M_1,K(R))\longrightarrow\Ext^{c-c_1}_{R}(M,K(R))
\longrightarrow\Ext^{c-c_1}_{R}(M_1,K(R)) \\ & \longrightarrow\Ext^{c-c_1+1}_{R}(M/M_1,K(R))\longrightarrow\Ext^{c-c_1+1}_{R}(M,K(R))\longrightarrow 0,
\end{array}\]
and the isomorphisms $\Ext^{j}_{R}(M/M_1,K(R))\cong\Ext^{j}_{R}(M,K(R))$ for all $j\neq c-c_1,c-c_1+1$. On the other hand, $M/M_1$ is RCMF module w.r.t $\fa$ by Lemma~\ref{l1} and $M_1/M_1\subset M_2/M_1\subset\ldots\subset M_r/M_1=M/M_1$ is the relative Cohen-Macaulay filtration w.r.t $\fa$ of length $r-1$. Therefore by inductive hypothesis and Lemma~\ref{l1}, $\Ext^{j}_{R}(M/M_1,K(R))=0$ for $j\notin\{c-c_2,\ldots,c-c_r\}$. (Note that $\cd(\fa,M_i/M_1/M_{i-1}/M_1)$ is equal to $c_i$ for all $2\leq i\leq r$.) This implies that $\Ext^{c-c_1}_{R}(M/M_1,K(R))=0$ and $\Ext^{c-c_1+1}_{R}(M/M_1,K(R))=0$. So by the last exact sequence we have $\Ext^{c-c_1}_{R}(M,w_R)\cong\Ext^{c-c_1}_{R}(M_1,K(R))$ is relative Cohen-Macaulay w.r.t $\fa$ of cohomological dimension $c_1$ and $\Ext^{c-c_1+1}_{R}(M,K(R))=0$. Thus we get the isomorphism $\Ext^{j}_{R}(M/M_1,K(R))\cong\Ext^{j}_{R}(M,K(R))$ for all $j\neq c-c_1$. 
This implies that $\Ext^{j}_{R}(M,K(R))=0$ for all $j\notin\{c-c_1,c-c_2,\ldots,c-c_r\}$. Now, by this procedure, considering the filtration $M_i/M_i\subset M_{i+1}/M_i\subset\ldots\subset M/M_i$ for $M/M_i$ we get the assertion by induction.
\end{proof}

\medskip
\begin{cor}\label{c1}
Let $(R,\fm)$ be a relative Cohen-Macaulay local ring w.r.t $\fa$ with $\cd(\fa,R)=c$ and $K(R)$ be its cohomological canonical module. If $M$ is an RCMF $R$-module w.r.t $\fa$, then for all $0\leq i\leq\cd(\fa,M)$, the modules $\Ext^{c-i}_{R}(M,K(R))$ are either zero or relative Cohen-Macaulay w.r.t $\fa$ of cohomological dimension $i$.
\end{cor}

\medskip
If we prove the converse inclusion of Corollary~\ref{c1}, we can conclude that the finite direct sum of RCMF modules is an RCMF module. There comes naturally a question whether the converse is true or not. Of course, it is easy to show that the direct sum of two relative Cohen-Macaulay modules of the same cohomological dimension is again a relative Cohen-Macaulay module.

%%%%%%%%%%%%%%%%%%%%%%%%%%%%%%%
\section{The finiteness dimension of modules and relative Cohen-Macaulayness}
In this section, some subjects such as the finiteness dimension $f_{\fa}(M)$ of $M$ relative to $\fa$ and the $\fb$-minimum $\fa$-adjusted depth $\lambda_{\fa}^{\fb}(M)$ of $M$ are studied. Note that we do not assume that $\fb\subseteq\fa$ in general. Furthermore, we introduce the ideal ${\fa}'(M)$ to describe arisen problems. We also prove that Faltings' Annihilator Theorem holds for relative Cohen-Macaulay $R$-modules. An important tool in this section is relative Cohen-Macaulayness w.r.t an ideal. We begin by defining the following notation in comparability with the notation ${\fa}(M)$ used in \cite{C} and \cite{SCH}.

\begin{dfn}
Let $M$ be an $R$-module and $\fa$ an ideal of $R$ with $h:=\height_M(\fa)$. We set
$$
{\fa}'(M):={\fa}'_{0}(M)\ldots{\fa}'_{h-1}(M){\fa}'_{h+1}(M)\ldots{\fa}'_d(M),
$$
where ${\fa}'_{i}(M)=\Ann_R(H^{i}_{\fa}(M))$ for all $0\leq i\leq d$ and $i\neq h$.
\end{dfn}

\medskip
\begin{rem}
Note that if $R$ is a relative Cohen-Macaulay local ring w.r.t $\fa$, then by the above notation we have ${\fa}'(R)=R$. Thus one deduces immediately that $\dim(R/{\fa}'(R))=-1$. In addition, it follows that $f_{{\fa}'(R)}(R)=\infty$.
\end{rem}

\medskip
The following corollary presents an interesting relation to the annihilators of components of the filtration of an RCMF module. 
\begin{cor}
Let $M$ be an RCMF module w.r.t $\fa$ and $\mathcal{M}=\{M_i\}^{c}_{i=0}$ be its cohomological dimension filtration, where $c=cd(\fa,M)$. Then
$$
\fa'_i(M)=\fa'_i(M_i)=\fa'_i(\mathcal{M}_i)
$$
for all $0\leq i\leq c$ with $i\neq\height_M \fa$.
\end{cor}
\begin{proof}
Use \cite[Proposition 2.12]{MAF}.
\end{proof}

\medskip
\begin{cor}
Let $M$ be a relative Cohen-Macaulay $R$-module w.r.t $\fa$ with $M\neq\fa M$. Then $f_{\fa}^{\fa'(M)}(M)=\lambda_{\fa}^{\fa'(M)}(M)$.
\end{cor}
\begin{proof}
As $\fa'(M)=R$, we have $f_{\fa}^{R}(M)=\grade(\fa,M)=\lambda_{\fa}^{R}(M)$ by \cite[Exercises 9.1.9 and 9.3.3]{BSH} as desired.
\end{proof}

\medskip
\begin{rem}\label{r2}
For any finite $R$-module $M$ such that $\height_M(\fa)>0$, we have the inequality $f_{\fa}(M)\leq\height_M(\fa)$ (see \cite[Remark 2.7(i)]{ADT}). It is clear that if $M$ is relative Cohen-Macaulay $R$-module w.r.t $\fa$, then $f_{\fa}(M)=\height_M(\fa)$. But there are some examples in which $f_{\fa}(M)<\height_M(\fa)$. Let $k$ be a field and $R=k[X,Y^2,XY,Y^3]$. The height of the ideal $\fa=(X,Y^2,XY,Y^3)R[Z]+Z R[Z]$ of the ring of polynomials $R[Z]$ is equal to $3$ whiles the finiteness dimension $f_{\fa}(R[Z])$ is $2$ (see \cite[Example 9.5.3]{BSH}).
\end{rem}

\medskip
In the following result, we provide an upper bound for $\lambda_{\fa}^{\fb}(M)$.
\begin{prop}\label{p2}
Let $M$ be a relative Cohen-Macaulay $R$-module w.r.t $\fa$ such that $\Supp(M/\fa M)\setminus\V(\fb)\neq\phi$. Then 
$$
\lambda_{\fa}^{\fb}(M)\leq\height_{M}(\fa).
$$
\end{prop}
\begin{proof}
Let $\fp\in\Supp(M/\fa M)\setminus\V(\fb)$ and $\fq$ be a minimal prime ideal of $\fa+\Ann_R(M)$ such that $\fq\subseteq\fp$. Then 
$$
H^{i}_{\fa}(M)_{\fq}\cong H^{i}_{(\fa+\Ann_R M)R_{\fq}}(M_{\fq})\cong H^{i}_{\fq R_{\fq}}(M_{\fq})
$$
for all $i\geq 0$. By \cite[Theorem 6.1.4]{BSH} and since $H^{\depth M_{\fq}}_{\fq R_{\fq}}(M_{\fq})$ is not zero, we conclude that 
$$
\depth M_{\fq}=\dim M_{\fq}.
$$
But $\dim M_{\fq}=\height_M(\fa)$, so that 
$$
\depth M_{\fq}=\height_M(\fa).
$$
Now, by definition, $\lambda_{\fa}^{\fb}(M)\leq\depth M_{\fq}=\height_M(\fa)$ as required.
\end{proof}

\medskip
Now, we are in position, to bring our main result of this section which shows that the Faltings' Annihilator Theorem holds for relative Cohen-Macaulay modules.
\begin{thm}\label{t1}
Let $R$ be a Noetherian ring and $M$ be a relative Cohen-Macaulay $R$-module w.r.t $\fa$ such that $\Supp(M/\fa M)\setminus\V(\fb)\neq\phi$. Then 
$$
f_{\fa}^{\fb}(M)=\lambda_{\fa}^{\fb}(M).
$$
\end{thm}
\begin{proof}
As $M$ is relative Cohen-Macaulay w.r.t $\fa$, it is easy to see that $\height_M(\fa)=\cd(\fa,M)=f_{\fa}^{\fb}(M)$. Therefore by \cite[Theorem 9.3.7]{BSH} and Proposition~\ref{p2}, we get 
$$
\height_M(\fa)\leq\lambda_{\fa}^{\fb}(M)\leq\height_M(\fa)
$$
as desired.
\end{proof}

\medskip
For a particular case of Theorem~\ref{t1}, we obtain the following result.
\begin{cor}
Let $(R,\fm)$ be a Noetherian local ring. If $M$ is a Cohen-Macaulay $R$-module, then $f_{\fm}^{R}(M)=\lambda_{\fm}^{R}(M)$.
\end{cor}
\begin{proof}
It is an immediate consequence of Theorem~\ref{t1}, since $M$ is relative Cohen-Macaulay w.r.t $\fm$. 
\end{proof}

\medskip
\begin{rem}\label{r3}
Let $I$ be an ideal of $R$. Recall that a sequence $a_1,\ldots,a_n$ of elements of $R$ is an $I$-filter regular $M$-sequence if $a_i\notin\fp$ for all $\fp\in\Ass(M/(a_1,\ldots,a_{i-1})M)\setminus\V(I)$ ($i=1,\ldots,n$) and $\Supp(M/(a_1,\ldots,a_n)M)\setminus\V(I)\neq\phi$. It is well-known that the length of any maximal $I$-filter regular $M$-sequence contained in $\fa$ which is denoted by $I-\fgrade_M(\fa)$ is the least integer $i\geq 0$ such that $\Supp(H^{i}_{\fa}(M))\nsubseteq\V(I)$, provided $\Supp(M/\fa M)\setminus\V(I)\neq\phi$ (see \cite[Theorem 1.13]{A} and P. 38 of \cite{T}). It is easy to see that if $(R,\fm)$ is a local ring and $\fa$ is an ideal of $R$ with $\Supp(M/\fa M)\setminus\{\fm\}\neq\phi$, then
$$
\fm-\fgrade_M(\fa)=\min\{i\in\mathbb{N}_0\mid H^{i}_{\fa}(M)\ \text{is not Artinian} \}.
$$
In the above situation, if $M$ is a relative Cohen-Macaulay $R$-module w.r.t $\fa$, then since $H^{f_{\fa}(M)}_{\fa}(M)\neq 0$, we get 
$$
f_{\fa}(M)=\height_M(\fa)=\fm-\fgrade_M(\fa).
$$
Therefore $H^{f_{\fa}(M)}_{\fa}(M)$ is not Artinian. With the notion $\fq_{\fa}(M)$ used in \cite{Ha}, since the top local cohomology module $H^{\dim M}_{\fa}(M)$ is Artinian, we have $q_{\fa}(M)<\dim M$. In particular, if $H^{i}_{\fa}(M)=0$ for all $i\neq\height_M(\fa)$, then $q_{\fa}(M)=f_{\fa}(M)$. 
\end{rem}

\medskip
Although, the top local cohomology modules are almost always non-Artinian as we have seen in Remark~\ref{r3}, we provide conditions to extract an Artinian submodule of such modules as follows.
\begin{prop}\label{p4}
Let $(R,\fm)$ be a local ring and $M$ be a relative Cohen-Macaulay $R$-module w.r.t $\fa$ with $\height_M(\fa)=h$ and $\dim_R(M/\fa M)=1$. Then $H^{0}_{Rx}(H^{h}_{\fa}(M))$ is Artinian $R$-module for some $x\in\fm$.
\end{prop}
\begin{proof}
The assumption $\dim(M/\fa M)=1$ guaranties the existence of $x\in\fm$ such that $\dim M/(\fa+Rx)M=0$. Considering the following exact sequence 
$$
0\longrightarrow H^{1}_{Rx}(H^{h-1}_{\fa}(M))\longrightarrow H^{h}_{\fm}(M)\longrightarrow H^{0}_{Rx}(H^{h}_{\fa}(M))\longrightarrow 0
$$
\cite[Corollary 3.5]{SCH2} and the fact that $H^{h-1}_{\fa}(M)=0$ we deduce that $H^{0}_{Rx}(H^{h}_{\fa}(M))$ is Artinian over $R$.
\end{proof}

\medskip
In \cite[Lemma 2.7]{MNS}, it is proved that $f_{\fa}(M)\leq s+f_{\fa}(M/\underline{x}M)$ where $s$ is a non-negative integer and $\underline{x}=x_1,\ldots,x_s$ is an $M$-regular sequence. We now provide conditions that this inequality becomes an equality. 
\begin{cor}\label{c7}
Let $(R,\fm)$ be a local ring and $M$ be a relative Cohen-Macaulay $R$-module w.r.t $\fa$ with $c:=\height_M \fa$. Assume that $\underline{x}=x_1,\ldots,x_s$ is a regular sequence on both $M$ and $D_R(H^{c}_{\fa}(M))$. Then $f_{\fa}(M)=f_{\fa}(M/\underline{x}M)+s$.
\end{cor}
\begin{proof}
According to definition and \cite[Corollary 2.23]{MAF}, we have $f_{\fa}(M)=\cd(\fa,M)$ and $f_{\fa}(M/\underline{x}M)=\cd(\fa,M/\underline{x}M)$. Now, the assertion follows by \cite[Theorem 2.22]{MAF}.
\end{proof}

%\medskip
%In the following result, we consider the behavior of the direct sum of relative Cohen-Macaulay modules w.r.t an ideal.
%\begin{prop}
%Let $M'$ and $M''$ be two finite $R$-modules. Then $M'\oplus M''$ is relative Cohen-Macaulay w.r.t $\fa$ if and only if $M'$ and $M''$ are relative Cohen-Macaulay w.r.t $\fa$ of the same cohomological dimension. 
%\end{prop}

\medskip
In the remainder of this section, we present a number of corollaries about the finiteness dimension $f_{\fa}(M)$ of $M$ relative to $\fa$, where $M$ is a relative Cohen-Macaulay $R$-module w.r.t $\fa$. In particular, in two corollaries below, we characterize the behavior of it under some homomorphisms.
\begin{cor}\label{c6}
Assume that for all integers $k\in\mathbb{N}$ there is a ring homomorphism $\phi_k:R\longrightarrow R$ such that the inverse system of ideals $\{\phi_k(\fa)R\}_{k\in\mathbb{N}}$ is cofinal with $\{{\fa}^k\}_{k\in\mathbb{N}}$. If $R$ is relative Cohen-Macaulay w.r.t $\fa$, then $f_{\fa}(R)=f_{\phi_k(\fa)R}(R)$ for all $k\in\mathbb{N}$.
\end{cor}
\begin{proof}
First, as the inverse system of ideals $\{\phi_k(\fa)R\}_{k\in\mathbb{N}}$ is cofinal with $\{{\fa}^k\}_{k\in\mathbb{N}}$, for all $i$ there exists $j$ such that ${\fa}^j\subseteq\phi_i(\fa)R\subseteq\fa$. Thus $\grade(\phi_k(\fa)R,R)=\grade(\fa,R)$ for all $k\in\mathbb{N}$. On the other hand, $\cd(\phi_k(\fa)R,R)=\cd(\fa,R)$ for all $k\in\mathbb{N}$ by \cite[lemma 2.1]{DNT}. Thus $R$ is relative Cohen-Macaulay w.r.t $\phi_k(\fa)R$ for all $k\in\mathbb{N}$. Now, Remark~\ref{r2} completes the proof.
\end{proof}

\medskip
\begin{exmp}
For any Noetherian ring $R$ of positive characteristic, the preceding corollary is true if we take $\phi_k:R\longrightarrow R$ as the Frobenius map.
\end{exmp}

\medskip
\begin{cor}\label{c2}
Let $f:R\longrightarrow R'$ be a faithfully flat homomorphism of Noetherian rings. If $R$ is relative Cohen-Macaulay w.r.t $\fa$, then $f_{\fa}(R)=f_{\fa R'}(R')$. 
\end{cor}
\begin{proof}
By \cite[Proposition 3.7]{MAF}, $R'$ is a relative Cohen-Macaulay ring w.r.t $\fa R'$. Now, apply Remark~\ref{r2} to end the proof.
\end{proof}

To see examples for Corollary~\ref{c2}, we refer the reader to \cite[Examples 3.9 and 3.10]{MAF}.

\medskip
Now, we remind two classes of $R$-modules for which $f_{\fa}(M)=f_{\fa}(R)$.

\begin{dfn}
(see \cite{ES}) An $R$-module $M$ is called {\it multiplication} if for every submodule $N$ of $M$ there exists an ideal $\fa$ of $R$ such that $N=\fa M$. 
\end{dfn}

\medskip
\begin{cor}\label{c4}
Let $R$ be a relative Cohen-Macaulay ring w.r.t $\fa$ and $M$ be a faithful multiplication $R$-module. Then $f_{\fa}(M)=f_{\fa}(R)$.
\end{cor}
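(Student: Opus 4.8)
The plan is to reduce the statement about the faithful multiplication module $M$ to the base ring $R$ by showing that $M$ is relative Cohen-Macaulay w.r.t.\ $\fa$ with the same $\fa$-height as $R$, and then to invoke Theorem~\ref{T1}(i) on both $M$ and $R$ to conclude $f_{\fa}(M)=\height_M(\fa)=\height_R(\fa)=f_{\fa}(R)$. The key structural fact I would use is that a faithful multiplication module is, up to the relevant homological invariants, indistinguishable from the ring itself: for such $M$ one has $\fp M \neq M$ precisely when $\fp \in \Supp_R(M)=\Spec(R)$, so $\Supp_R(M)=\Spec(R)$ and in particular $\height_M(\fa)=\height_R(\fa)$.

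First I would recall the behaviour of local cohomology of a faithful multiplication module. The defining property $N=(N:_R M)M$ for every submodule, together with faithfulness, forces $M$ to be locally cyclic: for each $\fp$, $M_\fp$ is a cyclic (indeed free of rank one, since $M$ is faithful) $R_\fp$-module. Thus $M_{\fp}\cong R_{\fp}$ for all $\fp$, and consequently $H^{i}_{\fa}(M)_{\fp}\cong H^{i}_{\fa R_\fp}(R_\fp)$ for every $i$ and every prime $\fp$. This gives $H^{i}_{\fa}(M)=0$ if and only if $H^{i}_{\fa}(R)=0$, from which $\grade(\fa,M)=\grade(\fa,R)$ and $\cd(\fa,M)=\cd(\fa,R)$ follow immediately.

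Next I would assemble the conclusion. Since $R$ is relative Cohen-Macaulay w.r.t.\ $\fa$, we have $\grade(\fa,R)=\cd(\fa,R)=\height_R(\fa)$; combining this with the two equalities just obtained yields $\grade(\fa,M)=\cd(\fa,M)$, so $M$ is relative Cohen-Macaulay w.r.t.\ $\fa$, and moreover $\height_M(\fa)=\height_R(\fa)$. Applying Theorem~\ref{T1}(i) to $M$ gives $f_{\fa}(M)=\height_M(\fa)$, and applying it to $R$ gives $f_{\fa}(R)=\height_R(\fa)$; since these two heights coincide, $f_{\fa}(M)=f_{\fa}(R)$.

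The main obstacle I anticipate is justifying rigorously the local structure $M_{\fp}\cong R_{\fp}$ and the attendant commutation of local cohomology with localization for $M$. The localization $H^{i}_{\fa}(M)_\fp\cong H^{i}_{\fa R_\fp}(M_\fp)$ is standard for finite modules over Noetherian rings, so the real work is the purely module-theoretic claim that a faithful multiplication module is locally free of rank one; once that is in hand the cohomological comparison and the invocation of Theorem~\ref{T1} are formal. An alternative, if one prefers to avoid the local-freeness argument, is to use directly any available result in the literature on the coincidence of $\cd$ and $\grade$ for $M$ and $R$ in the faithful multiplication setting, but the local approach is cleaner and self-contained.
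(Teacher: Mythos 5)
Your proposal is correct, and at the skeleton level it matches the paper: establish that $M$ is relative Cohen-Macaulay w.r.t.\ $\fa$ with $\height_M(\fa)=\height_R(\fa)$, then apply Theorem~\ref{T1}(i) to both $M$ and $R$. The difference is in how the transfer step is handled: the paper's entire proof is ``Apply \cite[Corollary 3.2]{MAF} and Theorem~\ref{T1},'' outsourcing the fact that relative Cohen-Macaulayness passes from $R$ to a faithful multiplication module to the authors' earlier preprint, whereas you reprove that transfer from first principles. Your route rests on the standard structure theory of multiplication modules (Barnard; El-Bast and Smith): over a local ring a finitely generated multiplication module is cyclic, so $M_\fp\cong R_\fp/\Ann_{R_\fp}(M_\fp)$, and faithfulness together with finite generation gives $\Ann_{R_\fp}(M_\fp)=(\Ann_R M)_\fp=0$, whence $M_\fp\cong R_\fp$ for every $\fp$. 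Two small points deserve emphasis: finite generation (a standing hypothesis of the paper, so available to you) is genuinely needed both for the annihilator to commute with localization and for local cyclicity; and your flat-base-change comparison $H^{i}_{\fa}(M)_\fp\cong H^{i}_{\fa R_\fp}(M_\fp)\cong H^{i}_{\fa}(R)_\fp$, combined with the fact that a module vanishes iff all its localizations do, correctly yields $\grade(\fa,M)=\grade(\fa,R)$ and $\cd(\fa,M)=\cd(\fa,R)$ simultaneously, while $\Supp_R(M)=\Spec(R)$ gives the height equality. What your approach buys is a self-contained argument that actually exhibits why the cited result is true (and proves slightly more, namely the local isomorphism of all the cohomologies); what the paper's citation buys is brevity. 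The only soft spot is that you sketch rather than prove the locally-cyclic claim, but you flag it explicitly and it is a standard result, so the gap is bibliographic rather than mathematical.
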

\begin{proof}
Apply \cite[Corollary 3.2]{MAF} and Remark~\ref{r2}.
\end{proof}

\medskip 
\begin{dfn}(see \cite{FO} and \cite{V})
An $R$-module $M$ is {\it semidualizing} if it satisfies the following:
\begin{itemize}
\item[(i)] $M$ is finitely generated,
\item[(ii)] The homotopy map ${\chi}^{R}_{M}: R\longrightarrow \Hom_{R}(M,M)$, defined by $r\mapsto [s\mapsto rs]$, is an isomorphism, and
\item[(iii)] $\Ext^{i}_{R}(M,M)=0$ for all $ i>0.$
\end{itemize}
\end{dfn}

\medskip
\begin{cor}\label{c5}
Let $R$ be a relative Cohen-Macaulay ring w.r.t $\fa$ and $M$ be a semidualizing $R$-module. Then $f_{\fa}(M)=f_{\fa}(R)$.
\end{cor}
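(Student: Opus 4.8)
The plan is to reduce Corollary~\ref{c5} to the known behavior of local cohomology along the module $M$ relative to $R$, exploiting the defining isomorphism $R \cong \Hom_R(M,M)$ together with the vanishing of higher self-extensions. Since $f_\fa(M)$ depends only on which local cohomology modules $H^i_\fa(M)$ fail to be finitely generated, and by Theorem~\ref{T1}(i) we know $f_\fa(R)=\height_R(\fa)$ once $R$ is relative Cohen-Macaulay w.r.t $\fa$, the real content is to show that $M$ is itself relative Cohen-Macaulay w.r.t $\fa$ with the same height invariant $\height_M(\fa)=\height_R(\fa)$. Once that is established, another application of Theorem~\ref{T1}(i) gives $f_\fa(M)=\height_M(\fa)=\height_R(\fa)=f_\fa(R)$, and the corollary follows exactly as in the multiplication case (Corollary~\ref{c4}).

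First I would record the standard homological facts about semidualizing modules that make them behave like $R$ for the purposes of grade and cohomological dimension. The key point is that a semidualizing module is faithful (condition (ii) forces $\Ann_R(M)=0$, since $\chi^R_M$ being an isomorphism means multiplication by any nonzero $r$ acts nontrivially), so $\Supp_R(M)=\Spec(R)$ and hence $\height_M(\fa)=\height_R(\fa)$. Next, I would use the conditions $\Ext^i_R(M,M)=0$ for $i>0$ together with $\Hom_R(M,M)\cong R$ to compare the grade and the cohomological dimension of $\fa$ on $M$ with those on $R$. Concretely, $\grade(\fa,M)=\grade(\fa,R)$ because depth is detected through the $\Ext$ modules $\Ext^i_R(R/\fa^t,M)$, and the semidualizing hypotheses let one transfer the relevant (non)vanishing between $M$ and $R$; likewise $\cd(\fa,M)=\cd(\fa,R)$ can be obtained from the fact that $M$ and $R$ have the same support and that local cohomology commutes suitably with the self-Hom structure.

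With those equalities in hand, the argument closes quickly: if $R$ is relative Cohen-Macaulay w.r.t $\fa$, then $\grade(\fa,R)=\cd(\fa,R)$, and by the two equalities just described $\grade(\fa,M)=\cd(\fa,M)$, so $M$ is relative Cohen-Macaulay w.r.t $\fa$ as well, with $\height_M(\fa)=\height_R(\fa)=:h$. Applying Theorem~\ref{T1}(i) to both $M$ and $R$ yields $f_\fa(M)=h=f_\fa(R)$, as desired. In the write-up I would cite the appropriate result from \cite{MAF} (the semidualizing analogue of \cite[Corollary 3.2]{MAF} used for the multiplication case) to package the grade/cohomological-dimension comparison, so that the proof mirrors that of Corollary~\ref{c4}: invoke the cited preservation of relative Cohen-Macaulayness, then conclude by Theorem~\ref{T1}.

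The step I expect to be the main obstacle is the clean transfer $\grade(\fa,M)=\grade(\fa,R)$ and especially $\cd(\fa,M)=\cd(\fa,R)$, i.e.\ showing that a semidualizing module inherits the full cohomological profile of $R$ along $\fa$. Faithfulness handles the support and hence the height and the cohomological dimension equality $\cd(\fa,M)=\cd(\fa,R)$ fairly directly, but matching the grades (the \emph{least} nonvanishing index) is more delicate, since it requires using the self-$\Ext$ vanishing to propagate a regular-sequence/depth statement from $R$ to $M$ rather than merely comparing supports. If the cited lemma in \cite{MAF} already packages exactly this equivalence ``$R$ relative Cohen-Macaulay w.r.t $\fa$ iff $M$ is, for $M$ semidualizing,'' then the obstacle dissolves and the corollary is immediate from Theorem~\ref{T1}; otherwise one must supply the grade comparison by hand using the isomorphism $R\cong\Hom_R(M,M)$ together with $\Ext^{>0}_R(M,M)=0$.
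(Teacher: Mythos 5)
Your proposal is correct and matches the paper's argument: the paper's proof is exactly the one-line reduction you describe, citing the semidualizing analogue in \cite{MAF} (namely \cite[Corollary 3.4]{MAF}, which gives that $M$ is relative Cohen-Macaulay w.r.t $\fa$ with the same height) and then applying Theorem~\ref{T1}. Your supplementary sketch of how the grade and cohomological-dimension comparisons would be proved by hand (faithfulness via $\chi^R_M$, support equality, $\Ext$-vanishing) is sound but unnecessary, since the cited corollary packages precisely that equivalence.
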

\begin{proof}
Apply \cite[Corollary 3.4]{MAF} and Remark~\ref{r2}.
\end{proof}

\medskip
At the end, we obtain a result on canonical modules.
\begin{cor}\label{c3}
Let $(R,\fm)$ be a relative Cohen-Macaulay local ring w.r.t $\fa$ and $w_R$ be its canonical module. Then $f_{\fa}(w_R)=f_{\fa}(R)$.
\end{cor}
\begin{proof}
By \cite[Proposition 3.5]{MAF}, the canonical module $w_R$ is relative Cohen-Macaulay module w.r.t $\fa$. Thus the proof is completed by Remark~\ref{r2}.
\end{proof}

{\bf Acknowledgements.} 
The authors would like to express their thanks to Dr. Raheleh Jafari from Kharazmi University for her useful comments.


\begin{thebibliography}{99}
\baselineskip=.5cm
\bibitem{A}
Kh. Ahmadi, {\it Filters regular sequences, local cohomology modules and singular sets}, PhD. Thesis, 1996.

\bibitem{ADT} 
M. Asgharzadeh, K. Divaani-Aazar and M. Tousi, {\it The finiteness dimension of local cohomology modules and its dual notion}, J. Pure Appl. Algebra, {\bf 213} (2009) 321-328.

\bibitem{ASN} 
A. Atazadeh, M. Sedghi, and R. Naghipour, {\it Cohomological dimension filtration 
and annihilators of top local cohomology modules}, Colloquium Mathematicum, {\bf 139} (2015) 25-35. 

\bibitem{B}
M. Brodmann, {A rigidity result for highest order local cohomology modules}, Arch. Math. {\bf 79} (2002) 87-92.

\bibitem{BSH} 
M. P. Brodmann and R.Y. Sharp, {\it Local cohomology: an algebraic introduction with geometric applications}, Cambridge University Press, Cambridge, 2013.

\bibitem{C}
N. T. Coung, {\it On the dimension of the non-Cohen-Macaulay locus of local rings admitting dualizing complexes}, Math. Proc. Camb. Phil. Soc. {\bf 109} (1991) 479-488.

\bibitem{DNT} 
K. Divaani-Aazar, R. Naghipour and M. Tousi, {\it Cohomological dimension of certain algebraic varieties}, Proc. Amer. Math. Soc. {\bf 130} (2002) 3537-3544.

\bibitem{ES}
Z.A. El-Bast and P.F. Smith, {\it Multiplication modules}, Comm. Algebra {\bf 16} (1988) 755-779. 

\bibitem{F}
G. Faltings, {Uber die Annulatoren lokaler Kohomologiegruppen}, Arch. Math. (Basel) {\bf 30} (1978), 473-476.

\bibitem{FO}
H. B. Foxby, {\it Gorenstein modules and related modules}, Math. Scand. {\bf 31} (1972) 267-284. 

\bibitem{Ha}
R. Hartshorne, {\it Cohomological dimension of algebraic varieties}, Ann. of Math. {\bf 88} (1968) 403-450. 

\bibitem{HSch1} 
M. Hellus and P. Schenzel, {\it On cohomologically complete intersections}, Journal of Algebra, {\bf 320} (2008) 3733-3748.

\bibitem{H}
J. Herzog and E. Sbarra, {\it Sequentially Cohen-Macaulay modules and local cohomology}, In: Algebra, Arithmetic and Geometry, Part I, II. Tata Inst. Fund. Res. Stud. Math. {\bf 16} (2000) 327-164.

%\bibitem{L} 
%L. R. Lynch, {\it Annihilators of top local cohomology}, Comm. Algebra {\bf 40} (2012) 542-551.

\bibitem{MAF} 
M. Mast Zohouri, KH. Ahmadi Amoli, and S. O. Faramarzi, {\it Relative Cohen-Macaulay filtered modules with a view toward relative Cohen-Macaulay modules}, Math Reports {\bf 20}(70), 3 (2018).

\bibitem{M}
M. Mast Zohouri, {\it Local cohomology modules and relative Cohen-Macaulayness}, Discuss. Math. Gen. Algebra Appl. {\bf 38} (2018).

\bibitem{MNS}
A. A. Mehrvarz, R. Naghipour, M. Sedghi, {\it Faltings' local-global principal for the finiteness of local cohomology modules over Noetherian rings}, Comm. Algebra {\bf 43} (2015) 4860-4872.


\bibitem{RZ}
M. Rahro Zargar, {\it Some duality and equivalence results}, \href{http://arxiv.org/pdf/1308.3071v2.pdf}{arXiv:1308.3071v2}. 


\bibitem{SCH} 
P. Schenzel, {\it On the use of local cohomology in algebra and geometry}, in: Lectures at the summer school of commutative Algebra and Algebraic Geometry, Birkha, Basel, Ballatera, 1996. 

\bibitem{SCH2}
P. Schenzel, {\it Proregular sequences, local cohomology, and completion}, Math. Scand. {\bf 92} (2) (2003) 161-180.

\bibitem{T}
N. V. Trung, {\it Absolutely superficial sequence}, Math. Proc. Comb. Phil. Soc. 93 (1983) 35-47.

\bibitem{V}
W. V. Vasconcelos, {\it Divisor theory in module categories}, North-Holland Math. stud., vol. 14, North Holland Publishing Co., Amsterdam, 1974.

\end{thebibliography}
\end{document}